\title[Projective representations]{Projective representations of fundamental groups of quasiprojective varieties: a realization and a lifting result.}
\author{Ga\"el Cousin}
\definecolor{gris}{gray}{0.45}
\newtheorem{thm}{Theorem}[section]
\newtheorem{lem}[thm]{Lemma}
\newtheorem{prop}[thm]{Proposition}
\theoremstyle{definition}
\newtheorem{defi}[thm]{Definition}
\theoremstyle{remark}
\newtheorem{rem}[thm]{Remark}
\newcommand {\N}{\mathbb{N}}
\newcommand{\C}{\mathbb{C}}
\begin{document}
\maketitle
\begin{abstract}We discuss two results about projective representations of fundamental groups of quasiprojective varieties. The first is a realization result which, under a nonresonance assumption, allows to realize such representations as monodromy representations of flat projective logarithmic connections. The second is a lifting result: any representation as above, after restriction to a Zariski open set and finite pull-back, can be lifted to a linear representation.
\end{abstract}
\section{Introduction}
In this note, we study projective representations $\rho : \pi_1(X\setminus H)\rightarrow \mathrm{PGL}_m(\C)$, for $X$ a projective complex variety and $H$ an algebraic hypersurface in $X$.

If $X$ is smooth and $H$ is normal crossing, under some nonresonance assumption, we show that $\rho$ can be realized as the monodromy representation of a \textsl{flat logarithmic projective connection}; we refer to this as the \textsl{realization result}.
 This allows to extend to $X$ the analytic $\mathbb{P}^{m-1}$-bundle over $X\setminus H$ which underlies the suspension of $\rho$. Thanks to this and an algebraization result of Serre \cite{SerreChev58}, we can derive a second result (\textsl{lifting result}):
with no smoothness and normal crossing assumptions for $X$ and $H$, any $\rho : \pi_1(X\setminus H)\rightarrow \mathrm{PGL}_m(\C)$ is the projectivization of a linear representation, up to adding components to $H$ and pulling back by a generically finite morphism $Y\rightarrow X$. 
Contrary to the first, this second result is not new; it is a well known fact in \'etale cohomology that any class in ${H}^2(X\setminus H,\mathbb{Z}/m\mathbb{Z})$ can be made trivial after the two operations mentioned above.  Yet, it seems of interest to show how it can be derived quickly from the realization result.

We plan to use the lifting result in a future paper about algebraic isomonodromic deformations.

The proof of the realization result is an adaptation of the work of Deligne \cite{MR0417174} on the Riemann-Hilbert problem. For explicitness of basic ideas in this field, we will refer to \cite{MR2077648}.
The case of projective line bundles is considered by Loray and Pereira in \cite{MR2337401} in relation with transversely projective codimension one foliations. A natural question is to ask if this result could be recovered from a general version of Deligne's canonical extension that would respect $\mathfrak{g}$-connections, for $\mathfrak{g}$ a subalgebra of $\mathfrak{gl}_m(\C)$.

We should also mention the paper \cite{MR1223290} which describes in cohomological terms the obstructions to existence of linear and projective logarithmic connections on  logarithmic tangent bundles.

\textbf{Acknowledgements.} We are grateful to the referee for suggestions and helpful criticism. We acknowledge Carlos Simpson for informations about the state of the art for Theorem $\ref{liftingrepr}$.
We thank FIRB project ``Geometria differenziale e teoria geometrica delle funzioni", Marco Abate and Jasmin Raissy for their hospitality in Pisa. 
\section{Flat projective connections}
\subsection{Holomorphic connections} 
\begin{defi}
Let $m>0$.
Let $X$ be a complex manifold. A $\mathbb P^{m-1}$-bundle on $X$ is a holomorphically locally trivial bundle on $X$, $\pi :P\rightarrow X$ with fiber the complex $(m-1)$-dimensional projective space $\mathbb P^{m-1}$.
\end{defi}

\begin{defi}
Let $X$ be a complex manifold.
A  \textsl{holomorphic flat projective connection} $\nabla$ on the $\mathbb P^{m-1}$-bundle  $\pi : P\rightarrow X$ is a regular codimension $m-1$ holomorphic foliation on $P$, transversal to any fiber of $\pi$. 
Let $\star \in X$. The \textsl{monodromy representation} of $\nabla$ $$\rho : \pi_1(X,\star)\rightarrow Aut(\pi^{-1}(\star))$$ is defined as follows. For any loop $\alpha(t)$ in $X$ with base point $\star$, for any $y \in \pi^{-1}(\star)$, there is a unique lifting path $\tilde{\alpha}_y(t)$ of $\alpha(t)$, with $\tilde{\alpha}_y(0)=y$ and contained in a leaf of $\nabla$; we set $\rho(\alpha)$ to be the automorphism of $\pi^{-1}(\star)$ which satisfies $\rho(\alpha)(y)=\tilde{\alpha}_y(1)$ for every $y\in \pi^{-1}(\star)$.
\end{defi}
 Strictly speaking, this map is an antirepresentation, but we maintain the usual shortcut of ``monodromy representation".
Also, in effective computations, we are led to use an isomorphism $\pi^{-1}(\star)\stackrel{\phi}{\simeq} \mathbb{P}^{m-1}$ and replace $\rho$ by $\tilde{\rho}:\pi_1(X,\star)\rightarrow \mathrm{PGL}_m(\C)$ given by $\tilde{\rho}(\alpha)=\phi \circ \rho(\alpha) \circ \phi^{-1}$. We also make the abuse of language of naming $\tilde{\rho}$ the monodromy representation of $\nabla$. As $\phi$ is arbitrary, $\tilde{\rho}$ is well defined only up to conjugation by an element of $\mathrm{PGL}_m(\C)$.

It is well known that any $\rho :\pi_1(X,\star)\rightarrow \mathrm{PGL}_m(\C)$ can be realized as the monodromy representation of a unique (up to bundle isomorphism) flat projective connection, see \cite[Chapter V \S 4]{MR824240}.

For any flat holomorphic linear connection $D: \mathbf{V}\rightarrow \Omega_X^1\otimes\mathbf{V}$ on a vector bundle $V$ over $X$, the foliation induced by horizontal sections (i.e. sections $s$ such that $Ds=0$)  on the total space $V$ descends to a flat projective connection $\nabla=\mathbb{P}(D)$ on $\mathbb{P}(V)$. We call $\mathbb{P}(D)$ the \textsl{projectivization} of $D$.

Any flat connection is locally the trivial one on the trivial bundle. For this reason, any flat projective connection is locally the projectivization of a flat linear connection. 
Also we have a form of uniqueness.

\begin{lem}\label{uniqueness}
Let $D_i$, $i=1,2$ be two flat holomorphic connections on the same vector bundle $V$, with equal traces $$tr(D_1)=tr(D_2): det(\mathbf{V})\rightarrow \Omega_X^1 \otimes det(\mathbf{V}).$$ Then $\mathbb{P}(D_1)=\mathbb{P}(D_2)$ if and only if $D_1=D_2$.
\end{lem}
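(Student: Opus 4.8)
The plan is to reduce the statement to the elementary fact that two connections on a common bundle differ by a tensor, which the equality of projectivizations forces to be scalar and the equality of traces then forces to vanish. The implication $D_1=D_2 \Rightarrow \mathbb{P}(D_1)=\mathbb{P}(D_2)$ is immediate from the definition of the projectivization, so I would concentrate on the converse. The first step is to record that the difference $A:=D_1-D_2$ is $\calO_X$-linear, hence a global section of $\Omega^1_X\otimes \mathrm{End}(V)$; in any local frame in which $D_i=d+\Omega_i$ one has simply $A=\Omega_1-\Omega_2$, and $A$ transforms tensorially (not as a connection matrix).

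The heart of the argument is to translate the equality $\mathbb{P}(D_1)=\mathbb{P}(D_2)$ of horizontal foliations on $\mathbb{P}(V)$ into a pointwise condition on $A$. Working in a local trivialization $V|_U\cong U\times\C^m$, a $D_i$-horizontal curve $v(t)$ over a curve $x(t)$ satisfies $\dot v=-\Omega_i(\dot x)\,v$, so the induced motion of the line $[v]\in\mathbb{P}^{m-1}$ is the image of $-\Omega_i(\dot x)\,v$ in $T_{[v]}\mathbb{P}^{m-1}=\C^m/\C v$. Hence the two projective foliations coincide precisely when $A(\xi)\,v\in\C v$ for every tangent vector $\xi$ and every $v$. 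I would then invoke the standard linear-algebra fact that an endomorphism sending every vector into its own line is a scalar multiple of the identity; applied to $A(\xi)$ for each $\xi$, this gives $A=\omega\otimes\mathrm{Id}_V$ for a holomorphic $1$-form $\omega$ on $U$. Since $\mathrm{Id}_V$ is globally defined, these local forms glue to a global $\omega\in H^0(X,\Omega^1_X)$, so that $D_1=D_2+\omega\otimes\mathrm{Id}_V$.

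Finally I would bring in the trace hypothesis. Taking traces in $A=\omega\otimes\mathrm{Id}_V$ yields $tr(D_1)-tr(D_2)=tr(A)=m\,\omega$ as a difference of connections on $\det\mathbf{V}$. The assumption $tr(D_1)=tr(D_2)$ then forces $m\,\omega=0$, whence $\omega=0$ because $m>0$, and therefore $A=0$, i.e. $D_1=D_2$.

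I expect the main obstacle to be the careful verification of the projectivization criterion in the second paragraph: namely, checking that equality of the two foliations on $\mathbb{P}(V)$ is genuinely \emph{equivalent} to the pointwise condition $A(\xi)\,v\in\C v$ (and not merely implied by it), and confirming that this condition is frame-independent so that the resulting $\omega$ is globally well defined. Once that equivalence and the ``every vector an eigenvector'' lemma are in place, the rest is formal; I note in passing that flatness of the $D_i$ is not actually needed for the conclusion, only that they are connections on a common bundle with equal trace.
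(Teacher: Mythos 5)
Your proposal is correct, and it follows a genuinely different route from the paper's. The paper reduces to the trivial bundle, fixes the affine chart $y_m\neq 0$ of $\mathbb{P}^{m-1}$, and writes out the Riccati system satisfied by $z_i=y_i/y_m$; it then reads off directly from the coefficients of that system that the projective connection determines all off-diagonal entries $\omega_{i,k}$ and all differences $\omega_{i,i}-\omega_{m,m}$ of the connection matrix, after which the trace pins down the diagonal via $m\,\omega_{m,m}=\mathrm{trace}(\omega)-\sum_i(\omega_{i,i}-\omega_{m,m})$. You instead form the difference tensor $A=D_1-D_2\in H^0(X,\Omega^1_X\otimes\mathrm{End}(V))$, characterize equality of the two horizontal foliations on $\mathbb{P}(V)$ by the pointwise condition $A(\xi)v\in\C v$, and invoke the linear-algebra fact that an endomorphism fixing every line is scalar, so that $A=\omega\otimes\mathrm{Id}_V$ with $m\,\omega=tr(D_1)-tr(D_2)=0$. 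The underlying mechanism is the same in both cases --- the projectivization determines the connection up to a scalar-valued $1$-form, which the trace then fixes --- but your execution is coordinate-free and global, avoids the explicit chart computation, and makes visible that flatness of the $D_i$ plays no role (a fact also true of, but not remarked in, the paper's proof); the paper's computation, in exchange, produces the explicit Riccati system, which is the concrete local model used elsewhere in the paper. The one step you flagged as delicate, the equivalence between equality of the foliations and the condition $A(\xi)v\in\C v$, does hold: both distributions are transverse to the fibers and project isomorphically to $T_xX$, so equality of the foliations is equality of the distributions, which is exactly the statement that for each $\xi$ the two lifts at every $[v]$ have the same vertical part in $\C^m/\C v$.
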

\begin{proof}
Using local trivializations, it suffices to check the result  for the trivial bundle $V=\mathcal{O}^m$.

 Let $\omega=(\omega_{i,j})$ be a size $m$ square matrix with coefficients in $\Omega_X^1(X)$ and define $D(y)=dy-\omega \cdot y$ for any vector valued holomorphic function $y=(y_1,\ldots ,y_m)^t \in \mathcal{O}^m$, we suppose $D$ is flat, that is $d \omega=\omega\wedge \omega$. We have a system of differential equations that define $\mathbb{P}(D)$ in the affine chart $y_m\neq 0$, setting $z_i=y_i/y_m, i=1,\ldots,m-1$ we find: 
 
$$dz_i=\omega_{i,m}+z_i(\omega_{i,i}-\omega_{m,m})+\sum_{k=1,k\neq i}^{m-1} \omega_{i,k} z_k -\sum_{k=1}^{m-1}\omega_{m,k} z_i z_k,~~i=1,\ldots,m-1. $$

   We see that the coefficients $\omega_{i,k}, k \neq i $ of $\omega$ are determined by this system; so do the differences $\Delta_i:=\omega_{i,i}-\omega_{m,m}$.
 The family $(\Delta_i)$ and $trace(\omega)$ determine $m\cdot\omega_{m,m}=trace(\omega)-\sum_{i=1}^{m-1} \Delta_i$, and subsequently every $\omega_{i,i}$.
\end{proof}

\begin{prop}\label{trivlift}
Let $X$ be a complex manifold and $\nabla$ be a holomorphic flat projective connection on $\mathbb{P}(\mathcal{O}_X^m)$, then $\nabla=\mathbb{P}(D)$ for a unique holomorphic flat linear trace free connection $D: \mathcal{O}^m_X\rightarrow (\Omega_X^1)^m$. 
\end{prop}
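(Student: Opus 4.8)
The plan is to build $D$ locally, normalize each local piece to be trace free, and then glue the pieces using the uniqueness recorded in Lemma \ref{uniqueness}. First I would invoke the local statement recalled just above the proposition: over a sufficiently small open set $U$, the bundle $\mathbb{P}(\mathcal{O}_X^m)|_U$ is $\mathbb{P}(\mathcal{O}_U^m)$ and $\nabla|_U=\mathbb{P}(D_U)$ for some flat holomorphic linear connection $D_U$ on $\mathcal{O}_U^m$, say $D_U(y)=dy-\omega\cdot y$ with $d\omega=\omega\wedge\omega$. Such a $D_U$ need not be trace free, so I would correct it: set $\eta:=\frac1m\,\mathrm{tr}(\omega)$, put $\omega'=\omega-\eta\, I$ and $D_U'(y)=dy-\omega'\cdot y$. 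Since $\omega'$ and $\omega$ differ only on the diagonal by the common term $-\eta$, the coefficients $\omega_{i,k}$ ($k\neq i$), the entries $\omega_{i,m}$, $\omega_{m,k}$, and the differences $\omega_{i,i}-\omega_{m,m}$ appearing in the system written in the proof of Lemma \ref{uniqueness} are all unchanged; hence $\mathbb{P}(D_U')=\mathbb{P}(D_U)=\nabla|_U$, while $\mathrm{tr}(\omega')=0$ by construction.

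The one point requiring attention is that $D_U'$ is again flat. Here I would use that the trace of the curvature equation forces $\mathrm{tr}(\omega)$ to be closed: taking the trace of $d\omega=\omega\wedge\omega$ gives $d\,\mathrm{tr}(\omega)=\mathrm{tr}(\omega\wedge\omega)=0$, the last equality because $\omega_{i,j}\wedge\omega_{j,i}=-\omega_{j,i}\wedge\omega_{i,j}$ makes the contributions cancel in pairs (the diagonal ones vanishing outright). With $\eta$ thus closed, a direct computation gives $d\omega'=d\omega-d\eta\, I=d\omega$ and $\omega'\wedge\omega'=\omega\wedge\omega$: the cross terms $\eta I\wedge\omega+\omega\wedge\eta I$ cancel since $\eta$ is a $1$-form (so $\eta I\wedge\omega=-\omega\wedge\eta I$), and $\eta I\wedge\eta I=0$ because $\eta\wedge\eta=0$. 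Therefore $d\omega'=\omega'\wedge\omega'$ and $D_U'$ is flat.

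Finally I would glue. On the overlap of two such opens, $D_U'$ and $D_V'$ are flat linear connections on $\mathcal{O}^m$ with the same (zero) trace and the same projectivization $\nabla$, so Lemma \ref{uniqueness} yields $D_U'=D_V'$ there. Hence the local connections patch to a single global flat trace free linear connection $D$ on $\mathcal{O}_X^m$ with $\mathbb{P}(D)=\nabla$. Uniqueness of $D$ follows by the identical argument: any two trace free flat lifts of $\nabla$ have equal trace and equal projectivization, so they coincide by Lemma \ref{uniqueness}.

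I expect the only genuine obstacle to be the flatness of the normalized connection, that is, establishing the closedness of $\mathrm{tr}(\omega)$ from the curvature equation; the local existence is quoted from the discussion preceding the statement, and the gluing and the uniqueness are then formal consequences of Lemma \ref{uniqueness} once both local lifts are arranged to be trace free.
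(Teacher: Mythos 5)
Your proof is correct, and its global skeleton --- local trace-free flat lifts, glued and shown unique via Lemma \ref{uniqueness} --- is the same as the paper's; the genuine difference is in how the local trace-free lifts are produced. The paper never normalizes an arbitrary lift: it uses local triviality of the flat projective connection to obtain trivializing maps, and chooses lifts $G_i : U_i \to \mathrm{SL}_m(\C)$ of the resulting $\mathrm{PGL}_m(\C)$-valued maps (possible on small opens, since $\mathrm{SL}_m(\C)\to\mathrm{PGL}_m(\C)$ is a finite covering); the local connections $D_i=\psi_i^* d$ are then trace free with no computation, because $d$ is trace free and $\det G_i=1$. You instead take an arbitrary local flat lift $D_U$ and correct it by $\omega\mapsto\omega-\frac1m\mathrm{tr}(\omega)I$, which obliges you to check that flatness survives; your verification --- $\mathrm{tr}(\omega\wedge\omega)=0$ by pairwise cancellation, hence $\mathrm{tr}(\omega)$ is closed, hence the cross terms in $\omega'\wedge\omega'$ cancel --- is correct, as is the observation that the explicit system in the proof of Lemma \ref{uniqueness} is unchanged by a scalar modification of $\omega$. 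What your route buys is a reusable local statement (any flat linear lift can be normalized to a trace-free one with the same projectivization); what the paper's route buys is the complete avoidance of curvature computations, at the price of making the group-theoretic lifting explicit. One point you should be slightly more careful about: the fact you quote (``any flat projective connection is locally the projectivization of a flat linear connection'') produces a lift on a bundle identified with $\mathbb{P}(\mathcal{O}_U^m)$ only up to a bundle automorphism, i.e.\ up to a holomorphic $\mathrm{PGL}_m(\C)$-valued map; to get a connection matrix $\omega$ on $\mathcal{O}_U^m$ itself, with the identity identification of projective bundles, one must still lift that map to $\mathrm{GL}_m(\C)$ on a small open --- which is precisely the step the paper makes explicit with its maps $G_i$, and which your write-up silently absorbs into the quoted fact.
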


By trace free we mean $D(y)=dy-\omega \cdot y$ with $trace(\omega)=0$.

\begin{proof}
We can cover $X$ with open sets $U_i$ such that the connection is trivializable on $U_i$; taking $U_i$ small enough, this means there exist holomorphic maps  $G_i : U_i \rightarrow \mathrm{SL}_m(\C)$ such that, for $\phi_i=id\times \mathbb{P} G_i$, $\nabla_{\vert{U_i}}=\phi_i^*(\mathbb{P}(d))$, where $d$ is the trivial linear connection on $\mathcal{O}_{U_i}^m$. Also we can define flat linear connections by $D_i:=\psi_i^*d$, where $\psi_i=id\times G_i$. The connections $D_i$ are trace free because so is $d$ and the matrices $G_i$ take values in $\mathrm{SL}_m(\C)$. For clarity, let us draw a commutative diagram.
 $$\xymatrix{
(U_i\times\mathbb{P}^{m-1},\nabla)\ar[d]^{\phi_i}&(U_i\times\C^m, D_i) \ar[l]_-{\mathbb{P}} \ar[d]^{\psi_i}\\
(U_i\times\mathbb{P}^{m-1},\mathbb{P}(d) )&(U_i\times\C^m,d )\ar[l]_-{\mathbb{P}} \\
  }$$

 Let $U_{i,j}:=U_i \cap U_j$. If  $U_{i,j}\neq \varnothing$, the connections ${D_i}_{\vert U_{i,j}}$ and ${D_j}_{\vert U_{i,j}}$ are both trace free connections on the trivial bundle with projectivization $\nabla_{\vert U_{i,j}}$, thus they are equal by Lemma \ref{uniqueness}. This means the connection $D_i$ extends to a flat holomorphic connection $D$ on the trivial rank $m$ vector bundle over $X$ with $\mathbb{P}(D)=\nabla$. We have proved existence of the sought $D$, uniqueness follows from Lemma \ref{uniqueness}.
\end{proof}
\begin{rem}
It is tempting to try to generalize Proposition $\ref{trivlift}$ replacing $\mathbb{P}(\mathcal{O}_X^m)$ by any projectivization of a rank $m$ vector bundle. However, this would mean the map $H^2(X,\C^*)\rightarrow H^2(X,\mathcal{O}^*)$ would be injective on the image of the obstruction map $H^1(X,\mathrm{PGL}_m(\C)) \rightarrow H^2(X,\C^*)$. The work \cite{MR691957} allows to see this cannot be the case on any Abelian variety of dimension $g$; for any $m>1$ of the form $m=r^g$, $r\in \mathbb{N}^*$.
\end{rem}
\subsection{Logarithmic extensions}
\begin{defi}\label{logproj}
Let $X$ be a complex manifold and $H$ an analytic hypersurface. 
Let $P\rightarrow X$ be a $\mathbb{P}^{m-1}$-bundle on $X$.
A \textsl{logarithmic flat projective  connection} on $P$, with poles in $H$, is a singular holomorphic codimension $m-1$ foliation $\nabla$ on $P$ with the following properties.
\begin{enumerate}
\item
 The foliation $\nabla$ restricts to a holomorphic flat projective connection on $P_{\vert X \setminus H}$.
 
 \item For every $x \in H$, there exists a neighborhood $U$ of $x$ and a flat logarithmic connection $D$ on the trivial rank $m$ vector bundle over $U$ with poles in $H$, such that  there exists a bundle isomorphism $\phi : P_{\vert U} \rightarrow \mathbb{P}(\mathcal{O}^{m}_U)$ satisfying $\phi^*\mathbb{P}(D)_{\vert U\setminus H}=\nabla_{\vert P_{\vert U\setminus H}}$.
 \end{enumerate}
 
 We define the \textsl{monodromy representation} of $\nabla$ to be the one of $\nabla_{\vert P_{\vert X \setminus H}}$.
\end{defi}
Let us introduce a property $\mathcal{P}_m(M)$ for an element $M \in \mathrm{PGL}_m(\C)$.
$$\mathcal{P}_m(M): \left \lbrace \begin{array}{c} \mbox{ For any }\tilde{M}\in \mathrm{GL}_m(\C)\mbox{  with }\mathbb{P}(\tilde{M})=M,\\
\mbox{for any two eigenvalues }\lambda_1,\lambda_2\mbox{ of }\tilde{M},\\\lambda_1^m=\lambda_2^m \Rightarrow \lambda_1=\lambda_2.\end{array} \right .$$
Of course it suffices to check this condition for only one lift $\tilde{M}\in \mathrm{GL}_m(\C)$.

For $X$ a complex manifold and $H$ a hypersurface in $X$, if $H_j$ is a component of $X$ we call $\alpha \in \pi_1(X\setminus H,\star)$
a simple loop around $H_j$ if $\alpha$ is conjugate by a path to $(x,z)(t)=(e^{2i\pi t},z_0)$ for a coordinate patch $(x,z_1,\ldots,z_l)$ of $X$  centered at a point of $\{x=0\} \subset H_i$.
Our realization result is the following.

\begin{thm}\label{realization}
Let $X$ be a complex manifold. Let $H$ be a normal crossing analytic hypersurface on $X$.
Let $\rho : \pi_1(X\setminus H,\star)\rightarrow \mathrm{PGL}_m(\C)$ be an antirepresentation.
Suppose, for every simple loop $\alpha \in \pi_1(X\setminus H,\star)$ around any component  of $H$,  we have $\mathcal{P}_m\left(\rho(\alpha)\right)$.

Then $\rho$ is the monodromy representation of a flat projective logarithmic connection with poles in $H$.
\end{thm}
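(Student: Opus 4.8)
The plan is to extend across $H$ the flat projective connection that $\rho$ already defines on $X\setminus H$. By the realization recalled above \cite[Chapter V, \S 4]{MR824240}, $\rho$ is the monodromy of a flat projective connection $(P_0,\nabla_0)$ on a $\mathbb{P}^{m-1}$-bundle $P_0\to X\setminus H$. In view of Definition \ref{logproj}, it is enough to produce, near every point of $H$, a neighbourhood $U$, a trivialization $P_0|_{U\setminus H}\simeq \mathbb{P}(\mathcal{O}_{U\setminus H}^m)$ and a flat logarithmic \emph{linear} connection $D_U$ on $\mathcal{O}_U^m$ with poles in $H$ satisfying $\mathbb{P}(D_U)|_{U\setminus H}=\nabla_0|_{U\setminus H}$, and then to check that these local data glue, with $\nabla_0$ over $X\setminus H$ and among themselves over the overlaps, into a single $\mathbb{P}^{m-1}$-bundle $P\to X$ carrying a foliation $\nabla$ with $\nabla|_{X\setminus H}=\nabla_0$; such a $\nabla$ is then a logarithmic flat projective connection with poles in $H$ and monodromy $\rho$.

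For the local construction I would follow Deligne \cite{MR0417174} (see also \cite{MR2077648}). Choose each $U$ to be a polydisc adapted to the normal crossing divisor, so that $H\cap U=\{z_1\cdots z_k=0\}$ and $\pi_1(U\setminus H)\simeq \Z^k$ is generated by simple loops $\alpha_1,\dots,\alpha_k$ around the local branches. Put $M_j=\rho(\alpha_j)\in \mathrm{PGL}_m(\C)$; these commute. The first task is to lift them to \emph{commuting} matrices $\tilde M_j\in \mathrm{GL}_m(\C)$. This is exactly where the hypothesis $\mathcal{P}_m$ is decisive: for commuting $M_i,M_j$ any lifts satisfy $\tilde M_i\tilde M_j=\zeta\,\tilde M_j\tilde M_i$ with $\zeta^m=1$, and conjugation by $\tilde M_j$ multiplies the spectrum of $\tilde M_i$ by $\zeta$; if $\zeta\neq 1$ this produces two distinct eigenvalues $\lambda,\zeta\lambda$ of $\tilde M_i$ with $\lambda^m=(\zeta\lambda)^m$, contradicting $\mathcal{P}_m(M_i)$. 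Hence, under our assumption on every simple loop, all these scalars are trivial and commuting lifts exist. I would then run Deligne's recipe: choose commuting residue matrices $R_j$ with $\exp(-2\pi i R_j)=\tilde M_j$ and eigenvalues in a fixed fundamental domain for $\C/\Z$, and set $D_U:=d-\sum_j R_j\,\tfrac{dz_j}{z_j}$. This is a flat logarithmic connection whose projectivized monodromy is $(M_j)_j$, so $\mathbb{P}(D_U)|_{U\setminus H}=\nabla_0|_{U\setminus H}$. Replacing $\tilde M_j$ by $c_j\tilde M_j$ only shifts $R_j$ by a scalar matrix, hence alters $D_U$ by a rank-one connection on the determinant and leaves $\mathbb{P}(D_U)$ unchanged; after normalizing to the trace-free case, Lemma \ref{uniqueness} lets me treat $D_U$ as canonically attached to $\nabla_0|_{U\setminus H}$ and to the chosen fundamental domain.

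It remains to glue. Over an overlap $U\cap U'$ inside $X\setminus H$, both $\mathbb{P}(D_U)$ and $\mathbb{P}(D_{U'})$ coincide with $\nabla_0$; lifting to trace-free connections and applying Lemma \ref{uniqueness} and Proposition \ref{trivlift}, the transition is a well-defined isomorphism of projective bundles preserving the flat projective connection. The delicate point, which I expect to be \textbf{the main obstacle}, is to see that these identifications extend \emph{holomorphically across $H$}, so that the local charts patch into a genuine $\mathbb{P}^{m-1}$-bundle $P\to X$ extending $P_0$. The difficulty is that an $\mathrm{SL}_m$-normalized lift of a local monodromy is only defined up to the centre $\mu_m$, so the fractional parts of the residue eigenvalues --- which govern the extension and its elementary transformations across $H$ --- are a priori ambiguous by a common shift in $\tfrac1m\Z$. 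The content of $\mathcal{P}_m$ is precisely that no two distinct eigenvalues of a lift share an $m$-th power; this nonresonance both forbids the spectral coincidences just described and makes the fundamental-domain choice canonical and compatible from chart to chart, so that no ambiguous Hecke modification is introduced and the extension is unique. Granting this compatibility, $\nabla_0$ and the foliations $\mathbb{P}(D_U)$ patch into a foliation $\nabla$ on a $\mathbb{P}^{m-1}$-bundle $P\to X$ which, by construction, restricts to $\nabla_0$ off $H$ and is logarithmic along $H$ in the sense of Definition \ref{logproj}; its monodromy equals that of $\nabla_0$, namely $\rho$.
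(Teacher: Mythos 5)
Your overall strategy coincides with the paper's: take the suspension $\nabla_0$ of $\rho$ over $X\setminus H$, build local logarithmic linear models at points of $H$ from commuting lifts of the local monodromies, and glue. Your argument for the existence of commuting lifts is exactly the one in the paper (any lifts satisfy $\tilde M_i\tilde M_j=\zeta\,\tilde M_j\tilde M_i$ with $\zeta^m=1$, conjugation-invariance of the spectrum plus $\mathcal{P}_m$ forces $\zeta=1$), and your models $D_U=d-\sum_j R_j\,dz_j/z_j$ correspond to the paper's appeal to \cite[Lemme 2]{MR2077648}. Up to that point the proposal is sound.

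However, there is a genuine gap, and you have put your finger on it yourself: the phrase ``Granting this compatibility'' concedes precisely the step that carries all the technical weight. What must be proved is that the transition identifications, which a priori exist only as flat isomorphisms over $(U\cap U')\setminus H$ --- that is, holomorphic $\mathrm{PGL}_m(\C)$-valued maps on a non-simply-connected set --- extend holomorphically across $H$. Canonicity of the fundamental-domain choice of residues does not by itself yield this: the transition map could a priori have essential singular behaviour at $H$, and any attempt to lift it linearly meets a nontrivial $\mu_m$-monodromy around the divisor. The paper's mechanism has three parts. First, by Theorem \ref{Poincare}, near a smooth point of $H$ each local connection is isomorphic to the exact model $D_{A}$ whose residue has eigenvalue real parts in $[0,1)$; such an $A$ is automatically nonresonant, so the needed isomorphism extends across the divisor by Lemma \ref{extension}. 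Second, the transition map then becomes an automorphism of $\mathbb{P}(D_{A})$ over a punctured neighbourhood; its $\mathrm{SL}_m(\C)$-lift is multivalued with monodromy in $\mu_m$, and pulling back by the cyclic cover $u_1\mapsto u_1^m$ kills this monodromy while replacing $D_{A}$ by $D_{mA}$; Lemma \ref{extension} then applies because $mA$ --- not merely $A$ --- is nonresonant, and this is exactly where the $m$-th-power clause of $\mathcal{P}_m$ is consumed (Lemma \ref{extensionproj}). Third, this normalization argument is only available near smooth points of $H$, so the extension is first obtained off the singular locus $H_0$ of the normal crossing divisor and then pushed across $H_0$ using that it has codimension at least $2$. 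Your proposal contains none of these three steps; in particular the cyclic-cover trick, which is the actual reason the hypothesis involves $m$-th powers of eigenvalues rather than mere nonresonance of the chosen residues, and the separate treatment of the singular locus of $H$, are missing. As written, the argument constructs the local models and correctly diagnoses the difficulty, but does not prove the theorem.
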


Before proving Theorem \ref{realization}, we introduce local models and study their symmetries.

For $A_1\in M_m(\C)$, and coordinates $(x_1,\ldots,x_n)\in \C^n$ set for $y$ a local section of $\mathcal{O}^m$,
$$D_{A_1}(y):=dy-\frac{A_1 dx_1}{x_1}y.$$ The monodromy of $D_{A_1}$ is generated by $exp(2 i \pi A_1)$.

More generally, for a family $A=(A_1,\ldots,A_k)\in M_m(\C)^k$ of commuting matrices with $k\leq n$, we can define a flat connection $D_A$  on $\mathcal{O}^m$ by
$$D_{A}(y):=dy-\sum_{i=1}^k \frac{A_i dx_i}{x_i}y.$$
We say that $A_1\in M_m(\C)$ is \textsl{nonresonant} if for any pair $\mu_1,\mu_2$ of eigenvalues of $A_1$, $\mu_1-\mu_2 \not \in \mathbb{N}^*$. 

\begin{lem} \label{extension}Let $A_1\in M_m(\C)$ be nonresonant.
Let $\tau_1,\tau_2$ be two size $m$ square matrices of holomorphic  $1$-forms defined on a neighborhood $U$ of $0$ in $\C^n$.  Let $\omega_i:=A_1\frac{dx_1}{x_1}+\tau_i$, $i=1,2$.

For $i=1,2$; let $D_i(y):=dy-\omega_i\cdot y$ and suppose $D_i$ is a flat connection (\textit{i.e.} $d \omega_i=\omega_i \wedge \omega_i$).

 Then any isomorphism between the connections ${D_i}_{\vert U \setminus \{x_1=0\}}$ extends to an isomorphism on the whole of $U$.
\end{lem}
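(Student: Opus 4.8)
The plan is to translate the statement into a question about a holomorphic gauge transformation and then to exploit the nonresonance of $A_1$ through the operator $\mathrm{ad}(A_1):C\mapsto[A_1,C]$ on $M_m(\C)$. An isomorphism between ${D_1}_{\vert U\setminus\{x_1=0\}}$ and ${D_2}_{\vert U\setminus\{x_1=0\}}$ is a holomorphic map $M:U\setminus\{x_1=0\}\rightarrow \mathrm{GL}_m(\C)$ carrying horizontal sections of $D_1$ to horizontal sections of $D_2$; imposing $d(My)=\omega_2(My)$ on $D_1$-horizontal $y$ shows this is equivalent to the gauge equation $dM=\omega_2 M-M\omega_1$. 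Substituting $\omega_i=A_1\frac{dx_1}{x_1}+\tau_i$ with $\tau_i$ holomorphic, the polar part reads $x_1\partial_{x_1}M=[A_1,M]+x_1(\cdots)$ with holomorphic remainder, so, treating the transverse coordinates $(x_2,\dots,x_n)$ as holomorphic parameters, $M$ solves a Fuchsian linear system in $x_1$ whose residue is $\mathrm{ad}(A_1)$. The eigenvalues of $\mathrm{ad}(A_1)$ are exactly the differences $\mu_i-\mu_j$ of eigenvalues of $A_1$, and the nonresonance hypothesis, read over ordered pairs, says none of these is a nonzero integer.

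First I would produce, for each $i$, a multivalued fundamental solution of $D_i$ of the normal form $Y_i=H_i\cdot x_1^{A_1}$, with $H_i$ holomorphic and invertible on a full neighborhood of $0$ in $\C^n$; this is the standard consequence of nonresonance in the flat logarithmic setting, see \cite{MR2077648}, and it is here that flatness $d\omega_i=\omega_i\wedge\omega_i$ is used. Since $M$ carries the flat frame $Y_1$ to a flat frame of $D_2$, there is a constant invertible matrix $C$ with $MY_1=Y_2C$, hence $M=H_2\bigl(x_1^{A_1}C\,x_1^{-A_1}\bigr)H_1^{-1}$. Decomposing $C=\sum_\nu C_\nu$ along the eigenspaces of $\mathrm{ad}(A_1)$ gives $x_1^{A_1}C\,x_1^{-A_1}=\sum_\nu x_1^{\nu}C_\nu$, up to $\log x_1$ factors coming from the nilpotent part of $A_1$. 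As $M$ is single valued, every surviving term must be single valued, forcing $\nu\in\Z$ and the absence of logarithms; by nonresonance the only such $\nu$ is $0$ and $\mathrm{ad}(A_1)$ acts trivially there, so $x_1^{A_1}C\,x_1^{-A_1}=C_0$ is a constant matrix. Therefore $M=H_2C_0H_1^{-1}$ is holomorphic across $\{x_1=0\}$; since $M$ is invertible off the divisor, $\det M$ cannot vanish identically, so $C_0\in\mathrm{GL}_m(\C)$ and the extension of $M$ is an isomorphism on all of $U$.

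Equivalently, and more concretely, once $M$ is known to be meromorphic along $\{x_1=0\}$ by moderate growth at the regular singularity, I would expand $M=\sum_{k\geq-N}M_k x_1^k$ with $M_k$ holomorphic in the transverse variables and feed it into $x_1\partial_{x_1}M=[A_1,M]+x_1(\tau_2^{(1)}M-M\tau_1^{(1)})$; matching the lowest power $x_1^{-N}$ yields $(\mathrm{ad}(A_1)+N)(M_{-N})=0$, so a nonzero $M_{-N}$ would be an eigenvector of $\mathrm{ad}(A_1)$ for the eigenvalue $-N$, impossible for $N>0$ by nonresonance; hence $N\leq0$ and $M$ is holomorphic. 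The main obstacle I anticipate is the first step of the second paragraph: securing the normal form $Y_i=H_i x_1^{A_1}$ with $H_i$ holomorphic and invertible on a genuine $n$-dimensional neighborhood rather than merely along a transverse line, which is exactly where nonresonance and the integrability of $\omega_i$ enter; the remaining bookkeeping, namely the nilpotent refinement of the eigenvalue argument and the invertibility of $C_0$, is routine.
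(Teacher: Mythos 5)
Your proposal is correct in substance, but the two arguments you give have a different status relative to the paper. The paper offers no written proof of Lemma \ref{extension}: it defers to \cite[Lemme 3]{MR2077648} and \cite[Prop. 5.2.d)]{MR0417174}, and the argument in those references is precisely your ``more concrete'' third paragraph --- view the isomorphism as a single-valued flat section $M$ of the Hom-connection (your gauge equation $dM=\omega_2M-M\omega_1$), invoke moderate growth at the regular singularity to get meromorphy along $\{x_1=0\}$, expand $M=\sum_{k\geq -N}M_kx_1^k$, and kill the polar coefficients using $(\mathrm{ad}(A_1)+N)(M_{-N})=0$ together with the fact that the eigenvalues of $\mathrm{ad}(A_1)$ are the differences of eigenvalues of $A_1$. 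So that part of your proposal is essentially the proof the paper points to. Your primary route, via the normal form $Y_i=H_i\,x_1^{A_1}$ with $H_i$ holomorphic and invertible on a full $n$-dimensional neighborhood, is mathematically sound as a self-contained statement, but it would be circular inside this paper: that normal form is exactly Theorem \ref{Poincare}, which the paper deduces \emph{from} Lemma \ref{extension}. You correctly singled this out as the main obstacle, and your third paragraph is the right way to avoid it.

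Two smaller points. First, the moderate-growth assertion (a single-valued flat section of a logarithmic connection is meromorphic along the polar divisor) is the one genuinely nontrivial input you take as known; it is the classical regular-singularity statement, obtained in several variables by treating $(x_2,\ldots,x_n)$ as parameters in a Fuchsian ODE in $x_1$ with locally uniform estimates, and it is what the cited references rely on as well. Second, the Laurent-expansion route only shows that $M$ extends holomorphically, whereas the lemma asks for an isomorphism on all of $U$; you must also apply the same argument to $M^{-1}$, which satisfies the mirror gauge equation $dM^{-1}=\omega_1M^{-1}-M^{-1}\omega_2$ with the same residue operator $\mathrm{ad}(A_1)$, and then use $MM^{-1}=\mathrm{Id}$ to conclude that the extension is invertible across $\{x_1=0\}$. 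Your determinant argument for $C_0$ settles this only in the normal-form version, not in the Laurent version.
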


\begin{proof} This is an easy modification of the proof of \cite[Lemme $3$]{MR2077648}; see also \cite[Prop. $5.2.d)$]{MR0417174}.
\end{proof}

We will make use of the following normalisation result.
\begin{thm}[Poincar\'e]\label{Poincare}
Let $\omega=(\omega_{i,j})_{1\leq i,j\leq m}$, be a matrix of meromorphic $1$-forms on a neighborhood $U$ of $0$ in $\C^n$, with coordinates $x_1,\ldots,x_n$. Suppose $d \omega=\omega\wedge \omega$.
Suppose the only pole of $\omega$ is $x_1=0$ and $\omega=A_1 \frac{dx_1}{x_1}+\tau$ for a holomorphic matrix $1$-form $\tau$ and $A_1\in M_m(\C)$ a non resonant matrix, then there exists a neighborhood $V\subset U$ of $0$, such that the connection $D$
on $\mathcal{O}^m_V$ defined by $D(y)= dy-\omega\cdot y$ is isomorphic to ${D_{A_1}}_{\vert V}$.
\end{thm}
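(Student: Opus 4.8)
The plan is to reduce the $n$-variable statement to the one-variable case by means of Lemma \ref{extension}, and to settle the latter by a normalizing gauge transformation built as a convergent power series. Both $D$ and the model $D_{A_1}$ are flat logarithmic connections on the trivial bundle of the shape $A_1\frac{dx_1}{x_1}+\tau_i$, with $\tau_1=\tau$ and $\tau_2=0$, and they share the nonresonant residue $A_1$; the flatness of $D_{A_1}$ is immediate and that of $D$ is the hypothesis $d\omega=\omega\wedge\omega$. Hence Lemma \ref{extension} applies, and it suffices to exhibit an isomorphism between the holomorphic flat connections $D|_{U\setminus\{x_1=0\}}$ and ${D_{A_1}}|_{U\setminus\{x_1=0\}}$: such an isomorphism extends across $\{x_1=0\}$ to a bundle isomorphism on $U$ intertwining $D$ and $D_{A_1}$, which gives the conclusion on any $V\subseteq U$. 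Shrinking $U$ to a polydisk, $U\setminus\{x_1=0\}\cong\Delta^{*}\times\Delta^{n-1}$ deformation retracts onto a circle.

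On this space, a flat holomorphic connection is determined up to isomorphism by its monodromy representation of $\pi_1(U\setminus\{x_1=0\})\cong\Z$, and two such connections are isomorphic exactly when the images of a generator are conjugate in $\mathrm{GL}_m(\C)$. The monodromy of $D_{A_1}$ is $\exp(2\pi\sqrt{-1}\,A_1)$. To compute that of $D$, I take a generating loop $\gamma$ inside a transversal disk $\{x_2=\cdots=x_n=0\}$; parallel transport along $\gamma$ only sees the restriction of $D$ to that disk, a one-variable regular singular connection $x_1 y'=(A_1+x_1 T_1(x_1,0))\,y$ with residue $A_1$. Thus it is enough to establish the one-variable fact that such a nonresonant connection has monodromy conjugate to $\exp(2\pi\sqrt{-1}\,A_1)$.

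For the one-variable statement I look for a holomorphic gauge $P(x_1)$ with $P(0)=\mathrm{Id}$ so that $Y:=P\,x_1^{A_1}$ is a fundamental solution; then single-valuedness of $P$ forces the continuation of $Y$ around $x_1=0$ to be $Y\exp(2\pi\sqrt{-1}\,A_1)$, as desired. The defining equation is $x_1 P'=[A_1,P]+x_1 T_1 P$, and writing $P=\sum_{k\ge0}P_k x_1^{k}$ turns it into the recursion
$$\bigl(k\cdot\mathrm{Id}-\mathrm{ad}_{A_1}\bigr)P_k=\sum_{j=0}^{k-1}(T_1)_{k-1-j}\,P_j,\qquad k\ge1,\quad P_0=\mathrm{Id}.$$
Since $\mathrm{ad}_{A_1}$ has eigenvalues the differences $\mu_i-\mu_j$ of eigenvalues of $A_1$, the operator $k\cdot\mathrm{Id}-\mathrm{ad}_{A_1}$ is invertible whenever $k\neq\mu_i-\mu_j$; for $k\in\mathbb{N}^{*}$ this is precisely nonresonance, so every $P_k$ is uniquely determined. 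I expect the main obstacle to be convergence of this series. Here I use that, there being finitely many differences $\mu_i-\mu_j$ and none in $\mathbb{N}^{*}$, the numbers $|k-(\mu_i-\mu_j)|$ are bounded below and grow linearly in $k$, whence the norms of $(k\cdot\mathrm{Id}-\mathrm{ad}_{A_1})^{-1}$ remain bounded up to a polynomial factor coming from the Jordan structure of $A_1$; a classical method of majorants then dominates $\|P_k\|$ by the coefficients of a convergent scalar series, producing a holomorphic $P$ on a disk, invertible near $0$ since $P(0)=\mathrm{Id}$.

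Putting the pieces together, the one-variable construction shows that $D$ and $D_{A_1}$ have conjugate monodromy, hence are isomorphic as flat connections over $U\setminus\{x_1=0\}$; Lemma \ref{extension} extends this isomorphism to $U$, and restricting to a neighborhood $V$ of $0$ gives $D|_V\cong {D_{A_1}}|_V$.
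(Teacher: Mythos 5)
Your proof is correct and follows essentially the same route as the paper's: use the classical one-variable theory on a transversal disk to see that $D$ and ${D_{A_1}}$ have conjugate monodromy, conclude they are isomorphic on $U\setminus\{x_1=0\}$ (which retracts to a circle), and extend the isomorphism across $\{x_1=0\}$ via nonresonance and Lemma \ref{extension}. The only difference is that you spell out the one-variable normalization (the recursion $(k\cdot\mathrm{Id}-\mathrm{ad}_{A_1})P_k=\cdots$ and the majorant argument), which the paper simply cites as well known.
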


\begin{proof} The result for only one variable is well known and allows to suppose $\tau_{\vert (x_2,\ldots, x_n)=0}=0$. Then, coincidence of monodromy yields the required isomorphism outside $x_1=0$. Finally, our nonresonance assumption allows to extend the isomorphism holomorphically at $x_1=0$ by Lemma $\ref{extension}$.  
\end{proof}
\begin{lem}\label{extensionproj}
Let $A_1\in \mathrm{M}_m(\C)$. Suppose $m A_1$ is nonresonant.
Let $U$ be a neighborhood of $0$ in $\C^n$ and suppose we have an automorphism $\phi$
of the holomorphic projective connection $\nabla=\mathbb{P}({D_{A_1}}_{\vert U \setminus\{x_1=0\}})$, then $\phi$ extends to an automorphism  of the trivial $\mathbb{P}^{m-1}$-bundle over $U$.
\end{lem}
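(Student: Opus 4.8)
The plan is to reduce the statement to the linear extension result of Lemma \ref{extension} by producing, out of $\phi$, a genuine single-valued flat automorphism of the linear connection $D_{A_1}$ itself; the hypothesis that $mA_1$ be nonresonant will be used precisely to guarantee that $\phi$ admits a lift that is not merely projectively, but linearly, single-valued. First I would describe $\phi$ through monodromy. Set $V:=U\setminus\{x_1=0\}$, so that $\pi_1(V)\simeq\Z$ is generated by a simple loop around $\{x_1=0\}$, and the monodromy of $\nabla=\mathbb{P}(D_{A_1})$ is $M=\mathbb{P}(\exp(2i\pi A_1))\in\mathrm{PGL}_m(\C)$. Since $\phi$ is a bundle automorphism preserving the foliation $\nabla$, in a flat projective frame it is locally constant; as $V$ is connected this forces $\phi$ to be given by a single element $g\in\mathrm{PGL}_m(\C)$, and compatibility with the (multivalued) flat frame forces $g$ to commute with $M$. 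Choosing a lift $\hat g\in\mathrm{GL}_m(\C)$ of $g$ and writing $T:=\exp(2i\pi A_1)$, commutation in $\mathrm{PGL}_m(\C)$ means $\hat g\,T\,\hat g^{-1}=c\,T$ for some $c\in\C^*$, and comparing determinants gives $c^m=1$.

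The key step is to show that nonresonance of $mA_1$ forces $c=1$; I expect this to be the main obstacle, as everything hinges on ruling out a nontrivial scalar monodromy defect $c$, which is exactly what would obstruct a linear lift from being single-valued. Since $\hat g$ conjugates $T$ to $cT$, the two matrices have the same spectrum, so multiplication by $c$ permutes the spectrum of $T=\exp(2i\pi A_1)$. If $c\neq 1$ this permutation has no fixed point (a fixed eigenvalue $\mu$ would satisfy $(c-1)\mu=0$ with $\mu\neq 0$), hence there exist eigenvalues $\mu_1\neq\mu_2$ of $T$ with $\mu_2=c\mu_1$. Lifting them to eigenvalues $\lambda_1,\lambda_2$ of $A_1$ (necessarily distinct, since $\mu_1\neq\mu_2$) and writing $c=\exp(2i\pi k/m)$ with $k\not\equiv 0\pmod m$, we get $\lambda_2-\lambda_1\in\frac{k}{m}+\Z$, whence $m\lambda_2-m\lambda_1\in k+m\Z$ is a nonzero integer. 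But then one of $m\lambda_2-m\lambda_1$ and $m\lambda_1-m\lambda_2$ lies in $\mathbb{N}^{*}$, contradicting the nonresonance of $mA_1$. Therefore $c=1$.

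With $c=1$ the matrix $\hat g$ commutes with $T$. Let $Y(x)=x_1^{A_1}=\exp(A_1\log x_1)$ be the (multivalued) fundamental solution of $D_{A_1}$ on $V$ — it is a fundamental solution because the connection form only involves $\frac{dx_1}{x_1}$ — and set $F:=Y\,\hat g\,Y^{-1}$. Going once around $\{x_1=0\}$ sends $Y$ to $YT$, hence $F$ to $Y\,T\hat g T^{-1}\,Y^{-1}=F$, so $F$ is a single-valued holomorphic map $V\to\mathrm{GL}_m(\C)$ with $\mathbb{P}(F)=\phi$. Moreover $FY=Y\hat g$ is again a fundamental solution of $D_{A_1}$, so $F$ is a flat automorphism of the linear connection ${D_{A_1}}_{\vert V}$, that is $F^{*}D_{A_1}=D_{A_1}$. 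Finally, since $mA_1$ nonresonant trivially implies $A_1$ nonresonant, Lemma \ref{extension} applies with $\tau_1=\tau_2=0$ and shows that $F$ extends to an automorphism of the trivial rank $m$ bundle over $U$; projectivizing, $\phi=\mathbb{P}(F)$ extends to an automorphism of the trivial $\mathbb{P}^{m-1}$-bundle over $U$, as desired.
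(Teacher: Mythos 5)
Your proof is correct, but it follows a genuinely different route from the paper's. The paper never analyzes your scalar defect $c$: it lifts $G$ (the $\mathrm{PGL}_m(\C)$-valued map defining $\phi$) to a \emph{multivalued} $\mathrm{SL}_m(\C)$-valued map, whose monodromy is scalar of order dividing $m$, and kills the multivaluedness by pulling back under the cyclic cover $(u_1,\ldots,u_n)\mapsto(u_1^m,u_2,\ldots,u_n)$; upstairs the connection becomes $D_{mA_1}$, Lemma \ref{uniqueness} shows the now single-valued lift is an automorphism of the \emph{linear} connection $D_{mA_1}$, Lemma \ref{extension} — applied to $mA_1$, which is where the nonresonance hypothesis enters — extends it across $\{u_1=0\}$, and the extension then descends to $U$. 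You instead stay on the base: you encode $\phi$ by a constant $g\in\mathrm{PGL}_m(\C)$ commuting with the monodromy $M$, and your spectral argument that $c=1$ (correct, and the place where nonresonance of $mA_1$ is used) upgrades $g$ to a linear lift $\hat g$ commuting with $T$ on the nose, producing the single-valued flat automorphism $F=Y\hat g Y^{-1}$ of $D_{A_1}$ itself; Lemma \ref{extension} is then invoked only for $A_1$, which is nonresonant a fortiori. Your route buys a stronger intermediate fact — every automorphism of $\mathbb{P}({D_{A_1}}_{\vert U\setminus\{x_1=0\}})$ is the projectivization of an automorphism of ${D_{A_1}}_{\vert U\setminus\{x_1=0\}}$, with no covering or descent needed — and it isolates exactly why the hypothesis must bear on $mA_1$ rather than $A_1$: it is what rules out a nontrivial scalar monodromy defect. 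The paper's covering trick is softer, trading your explicit spectral computation for a detour through $D_{mA_1}$ plus Lemma \ref{uniqueness}. One presentational caveat: your assertion $\pi_1(U\setminus\{x_1=0\})\simeq\Z$ can fail for an arbitrary neighborhood $U$ of $0$; like the paper (``we can suppose $U$ is a polydisk''), you should either reduce to a polydisk or remark that the extension problem is local along $\{x_1=0\}$ and that local extensions glue by uniqueness, after which your argument goes through verbatim.
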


\begin{proof}
We can suppose $U$ is a polydisk.
The automorphism $\phi$ is of the form $(x,z) \mapsto (x,G(x)\cdot z)$ for a holomorphic function $$G : U \setminus \{x_1=0\} \rightarrow \mathrm{PGL}_m(\C).$$ We can lift this map to a multivalued holomorphic function from $U \setminus \{x_1=0\}$ to $\mathrm{SL}_m(\C)$. Its monodromy is generated by $M\mapsto \lambda M$, $\lambda$ satisfying $\lambda^m=1$.
Thus, if the covering $\pi : V \rightarrow U$ is defined by $ (u_1,\ldots,u_n)\mapsto (x_1,\ldots,x_n)=(u_1^m,u_2,\ldots,u_n)$, there exists a holomorphic function $\tilde{G} : V\setminus \{u_1=0\} \rightarrow \mathrm{SL}_m(\C)$ satisfying $\mathbb{P}(\tilde{G}(u))=G\circ\pi(u)$. This function induces an automorphism $(u,z)\mapsto (u,\tilde{G}(u)\cdot z)$ of the 
pull-back $\mathbb{P}({D_{mA_1}}_{\vert V \setminus\{u_1=0\}})$ of $\nabla$ by $\pi$. Also, by Lemma \ref{uniqueness}, $(u,y)\mapsto(u,\tilde{G}(u)\cdot y)$ is an automorphism of ${D_{mA_1}}_{\vert V \setminus\{u_1=0\}}$.
By hypothesis $mA_1$ is nonresonant, thus $\tilde{G}$ and $\tilde{G}^{-1}$ extend to holomorphic functions on $V$, by Lemma $\ref{extension}$. Thus $G$ also extends as desired.
\end{proof}

\noindent \textit{Proof of Theorem $\ref{realization}$.}
Let $U_0=X\setminus H$. Let $(H_i)_{i \in I}$ be the components of $H$. Let $\alpha_i \in \pi_1(U_0,\star)$ be a simple loop turning counterclockwise around $H_i$. For any $i$, choose a lift $M_i\in \mathrm{SL}_m(\C)$  for $\rho(\alpha_i)$ and let $A_i\in \mathrm{M}_m(\C)$, with real parts of its eigenvalues $\mu$ satisfying $0\leq \Re(\mu)<1$ be  such that $exp(2 i \pi A_i)=M_i$. Thanks to $\mathcal{P}(\rho(\alpha_i))$,  $m A_i$ is automatically nonresonant.

Let $p\in H$, and let $H_{i_j},j=1,\ldots, k$ be the components of $H$ which contain $p$. Because of normal crossings, there exists a neighborhood $U_p$ of $p$ and a chart $f_p: U_p\stackrel{\sim}{\rightarrow}  \Delta$ to $\Delta=\{(x_1,\ldots,x_n)\in\C^n,|x_i|<2\}$ such that $H_{i_j}\cap U_p$ is sent to $x_j=0$ by $f_p$. Let $\star_p=f_p^{-1}(1,\ldots,1)$.
The fundamental group $\pi_1(U_p,\star_p)$ is abelian, generated by the loops $(\beta_j)_{j=1,\ldots,k}$ defined by $x_j(\beta_j(t))=\exp(2i\pi t)$, $x_l(\beta_j(t))=1$ for $l\neq j$; $t\in[0,1].$
Choose a path $\tau$ in $U_0$ from $\star$ to $\star_p$, the loop $\tau \beta_j \tau^{-1}$ defines an element $\gamma_j \in \pi_1(U_0,\star)$ conjugate to $\alpha_{i_j}$. The elements $(\gamma_j)_{j=1,\ldots, k}$ commute pairwise.

 Choose lifts $N_j\in \mathrm{SL}_m(\C)$ of $\rho(\gamma_j)$ of the form $N_j=G_jM_{i_j}G_j^{-1}$, $G_j\in \mathrm{GL}_m(\C)$. The abelianity of $<(\mathbb{P}N_j)_j> \subset\mathrm{PGL}_m(\C)$ gives $N_{j_1} N_{j_2} N_{j_1}^{-1}=\lambda N_{j_2}$ with $\lambda^m=1$, but $\mathcal{P}_m(\rho(\gamma_{j_2}))$ yields $\lambda=1$  and we have abelianity of $<~(N_j)_j>\subset \mathrm{GL}_m(\C)$.

 Because of the latter abelianity and \cite[Lemme $2$]{MR2077648}, 
 there exists a linear flat connection $D_p$ on $\mathcal{O}^m_{U_p}$ with residues $B_{i_j}=G_jA_{i_j}G_j^{-1}$ on $H_{i_j}$ and monodromy $\hat{\rho}_p: \pi_1(U_p,\star_p)\rightarrow \mathrm{SL}_m(\C)$ given by $\hat{\rho}_p(\beta_j)=N_j$; set $\nabla_p=\mathbb{P}(D_p)$. On $U_0$ take $\nabla_0$ a projective flat connection with monodromy $\rho$. We denote $P_p$, $P_0$ the underlying bundles of $\nabla_p$,$\nabla_0$ respectively. Also, set $U_{p,0}:= U_p\cap U_0=U_p \setminus H$, $U_{p,q}:= U_p\cap U_q$.
 
 Consider $\rho_p^0,\rho_p:\pi_1(U_p,\star_p)\rightarrow \mathrm{PGL}_m(\C)$, the respective monodromies of ${\nabla_0}_{\vert U_{p,0}}$ and $\nabla_p$. We have $\rho_p^0(\beta_j)=M_{\tau}\mathbb{P}{(N_j)}M_{\tau}^{-1}=M_{\tau}\rho_p(\beta_j)M_{\tau}^{-1}$, where $M_{\tau}$ is the holonomy of $\nabla_0$ over the path $\tau$. 
Because of this conjugation, we have an isomorphism $\phi_{p,0} : {P_0}_{\vert U_{p,0}}\rightarrow {P_p}_{\vert U_{p,0}}$, such that $\phi_{p,0}^*{\nabla_p}_{\vert U_{p,0}}={\nabla_0}_{\vert U_{p,0}}$.

 Define $\phi_{0,p}=\phi_{p,0}^{-1}$ and denote $H_0$ the set of singular points of $H$.

By Theorem $\ref{Poincare}$ and Lemma $\ref{extensionproj}$, the composition $\phi_{p,0}\circ \phi_{0,q}$ extends to an isomorphism ${P_q}_{\vert U_{p,q}\setminus H_0}\simeq{P_p}_{\vert U_{p,q}\setminus H_0}$, then it extends to $\phi_{p,q} :{P_q}_{\vert U_{p,q}}\simeq {P_p}_{\vert U_{p,q}}$ because $H_0$ has codimension $>1$ in  $U_{p,q}$. By definition, the functions $\phi_{i,j}$ satisfy the cocycle relations and define a $\mathbb{P}^{m-1}$-bundle $P$ over $X$. The local connections $\nabla_{j}$ on $P_j$ satisfy $\phi_{i,j}^*{\nabla_i}_{\vert U_{i,j}}={\nabla_j}_{\vert U_{i,j}}$ and give the sought flat projective logarithmic connection on $P$.
\hfill$\square$\vspace{0.06cm}
\begin{rem}
In this proof, we have used conditions $\mathcal{P}_m(\rho(\alpha_{i}))$  for two reasons: to lift the local monodromy of $\nabla_0$ at any point of the polar locus and to
 extend bundle isomorphisms to the polar locus.
 As can be seen from Definition $\ref{logproj}$, the local liftings must exist to extend $\nabla_0$ to $H$. The proof of \cite[Lemma 2.3]{MR691957} shows an obstruction to local lifting. The extension condition is more subtle. We have chosen the property $\mathcal{P}_m$ to have a simple statement; it seems of interest to see how these conditions can be weakened. 
\end{rem}

\section{Lifting result}
\vspace{-0.2cm}
We will (re)prove the following.
\begin{thm}\label{liftingrepr}
Let $X$ be an irreducible projective complex variety and $H$ an algebraic hypersurface in $X$. Let $\star$ be a smooth point of $X\setminus H$. 
For any representation $\rho : \pi_1(X\setminus H,\star)\rightarrow \mathrm{PSL}_m(\C)$, there exists a hypersurface $H_{\rho}$ with $H \subset H_{\rho}, \star \not \in H_{\rho}$ and a generically finite morphism $f_{\rho} : (Y_{\rho},\star_{\rho}) \rightarrow (X,\star)$ of projective varieties with basepoints, \'etale in the neighborhood of $\star_{\rho}$,  such that $Y_{\rho}$ is smooth and  the pull-back $$f_{\rho}^*\rho : \pi_1(Y_{\rho}\setminus f_{\rho}^{-1}(H_{\rho}),\star_{\rho})\rightarrow \mathrm{PSL}_m(\C)$$ lifts to $\mathrm{SL}_m(\C)$, that is $f_{\rho}^*\rho=\mathbb{P}\hat{\rho}$, for a representation $$\hat{\rho} : \pi_1(Y_{\rho}\setminus f_{\rho}^{-1}(H_{\rho}),\star_{\rho})\rightarrow \mathrm{SL}_m(\C).$$
\end{thm}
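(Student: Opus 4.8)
The plan is to reduce to the setting of Theorem \ref{realization}, use it to extend the suspension bundle over a smooth projective compactification, algebraize by Serre, and then annihilate the two remaining obstructions — the Brauer obstruction to the bundle being a projectivization, and the determinant obstruction to the connection being traceless — by the two permitted operations. Throughout I keep all morphisms generically finite, so their composite is.

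\emph{Reduction to a nonresonant normal crossing model.} First I would apply Hironaka resolution to get a proper birational $\pi\colon\tilde X\to X$ with $\tilde X$ smooth projective and $\tilde H:=\pi^{-1}(H)\cup\mathrm{Exc}(\pi)$ normal crossing; since $\star$ is a smooth point off $H$, $\pi$ is an isomorphism near $\star$ and $\rho$ pulls back to $\pi_1(\tilde X\setminus\tilde H,\star)$. To force the hypothesis of Theorem \ref{realization}, I pass to a finite cover. For each component $\tilde H_i$ let $M_i\in\mathrm{SL}_m(\C)$ lift the local monodromy, and let $N$ be the least common multiple of the (finitely many) orders of those eigenvalue ratios of the $M_i$ that happen to be roots of unity. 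A finite cover $c\colon\tilde X'\to\tilde X$ ramified to order $N$ along every component of $\tilde H$ (a Kawamata-type cover, made smooth with normal crossing ramification after a further resolution, and étale over the preimage of $\star$) replaces each local monodromy by $M_i^{N}$. By the choice of $N$, every root-of-unity eigenvalue ratio of $M_i^{N}$ equals $1$ while the others are not roots of unity; hence $\mathcal{P}_m$ holds for every simple loop, so $mA_i$ is nonresonant and Theorem \ref{realization} applies to the pulled-back $\rho$.

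\emph{Algebraization and killing the Brauer class.} Theorem \ref{realization} yields a flat projective logarithmic connection $\nabla$ on a $\mathbb{P}^{m-1}$-bundle $P\to\tilde X'$ extending the suspension of the pulled-back $\rho$. As $\tilde X'$ is projective, Serre's algebraization result \cite{SerreChev58} makes $P$ an algebraic $\mathbb{P}^{m-1}$-bundle, with Brauer class $\beta\in\mathrm{Br}(\tilde X')$. Let $q\colon P\to\tilde X'$ be the projection; cutting the smooth projective $P$ (of dimension $\dim X+m-1$) by $m-1$ general very ample divisors gives, by Bertini, a smooth $Y\subset P$ with $f':=q|_Y\colon Y\to\tilde X'$ generically finite, surjective, and étale over a dense open containing the image of $\star$. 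I take $\star_\rho\in Y$ over $\star$ in this étale locus. Because $Y\subset P$, the pulled-back bundle $f'^*P=Y\times_{\tilde X'}P$ carries the tautological section $y\mapsto(y,y)$; since the stabiliser of a point of $\mathbb{P}^{m-1}$ in $\mathrm{PGL}_m$ lifts to $\mathrm{GL}_m$, a section forces the Brauer class to vanish, so $f'^*\beta=0$ and $f'^*P=\mathbb{P}(E')$ for an algebraic vector bundle $E'$ on $Y$.

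\emph{Killing the determinant and concluding.} It remains to lift the flat projective connection $f'^*\nabla$ on $\mathbb{P}(E')$ to a flat $\mathrm{SL}_m$-connection. Choosing a rational section of $\det E'$ and deleting the support $Z$ of its divisor trivialises $\det E'$ over $U:=Y\setminus(f'^{-1}(\tilde H)\cup Z)$, so $E'|_U$ reduces to structure group $\mathrm{SL}_m(\C)$. On trivialising opens Proposition \ref{trivlift} gives local traceless flat lifts of $f'^*\nabla$; as the transition functions lie in $\mathrm{SL}_m(\C)$ these stay traceless after gauge, hence agree on overlaps by Lemma \ref{uniqueness} and glue to a global traceless flat $D'$ on $E'|_U$ with $\mathbb{P}(D')=f'^*\nabla|_U$. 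Its monodromy is the sought $\hat\rho\colon\pi_1(U,\star_\rho)\to\mathrm{SL}_m(\C)$ with $\mathbb{P}\hat\rho=f_\rho^*\rho$, where $f_\rho:=\pi\circ c\circ f'$, $Y_\rho:=Y$, and $H_\rho$ is the union of $H$ with the hypersurface $f_\rho(f'^{-1}(\tilde H)\cup Z)$ — it contains $H$, avoids $\star$, and $Y_\rho\setminus f_\rho^{-1}(H_\rho)\subseteq U$. I expect the main obstacle to be exactly this final lifting: checking that the two allowed operations suffice to annihilate first the $\mathrm{PGL}_m$ Brauer obstruction and then the residual determinant (that is, $\mathrm{Pic}/m$) obstruction, and that the gluing via Lemma \ref{uniqueness} really returns a representation valued in $\mathrm{SL}_m(\C)$ rather than only in $\mathrm{GL}_m(\C)$.
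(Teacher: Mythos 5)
Your proposal is correct in substance, and its first half (resolution of singularities, then a ramified cover killing the finite-order eigenvalue ratios so that $\mathcal{P}_m$ holds and Theorem \ref{realization} applies) coincides with the paper's own reduction. Where you genuinely depart from the paper is the passage from the flat projective logarithmic connection to a linear one. The paper invokes Serre's theorem in its strong, ``locally isotrivial'' form: the algebraized bundle $P$ becomes literally trivial after a finite \'etale cover $q : U_2 \to U_1$ of a Zariski neighborhood $U_1$ of the basepoint, so after compactifying $U_2$ it can apply Proposition \ref{trivlift} to a trivial bundle, and the Brauer-type and determinant obstructions vanish simultaneously. You use Serre only for algebraicity and kill the two obstructions separately: the Brauer class by pulling back to the multisection $Y \subset P$ cut by $m-1$ general very ample divisors, where the tautological section of $f'^*P$ together with the lifting of the parabolic stabilizer to $\mathrm{GL}_m(\C)$ gives $f'^*P=\mathbb{P}(E')$ (correct, and standard); the determinant by deleting the divisor $Z$ of a rational section of $\det E'$ and gluing local traceless lifts. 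That gluing step is in effect a valid strengthening of Proposition \ref{trivlift} from trivial bundles to $\mathrm{SL}_m$-bundles: since $\mathrm{tr}(g^{-1}\omega g - g^{-1}dg)=\mathrm{tr}(\omega)-d\log\det g$, tracelessness is preserved by holomorphic $\mathrm{SL}_m(\C)$-valued transitions, Lemma \ref{uniqueness} then forces the local lifts to agree on overlaps, and the resulting flat trivialization of $\det E'|_U$ places the holonomy in $\mathrm{SL}_m(\C)$, which settles the worry in your last sentence. The trade-off: the paper's route is shorter because Serre's Th\'eor\`eme 3 already packages Zariski-local triviality after finite \'etale covers; yours needs only GAGA-type algebraization, at the price of doing the obstruction theory by hand.

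Two steps need repair. First, the parenthetical ``made smooth with normal crossing ramification after a further resolution'' would break the argument if actually exercised: a resolution performed \emph{after} the ramified cover creates exceptional divisors lying over crossings of $\tilde H$, and the local monodromy around such a divisor is of the form $\rho(\alpha_{i_1})^{a_1}\cdots\rho(\alpha_{i_k})^{a_k}$ for commuting simple loops $\alpha_{i_j}$. Even with every $a_j$ divisible by $N$, and even when the chosen lifts commute, $\mathcal{P}_m$ can fail for such a product: eigenvalue ratios of the factors multiply, and a product of numbers that are not roots of unity can perfectly well be a nontrivial $m$-th root of unity. So nonresonance of the factors does not pass to the product, and Theorem \ref{realization} would no longer apply. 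The fix is to take no further resolution: Kawamata's covering theorem (\cite[Theorem 17]{MR622451}, the reference the paper uses) directly produces a \emph{finite} cover that is smooth, with normal crossing ramification locus and prescribed ramification indices; finiteness guarantees that every component of the preimage of $\tilde H$ dominates some $\tilde H_i$, with local monodromy conjugate to $\rho(\alpha_i)^e$, $N\mid e$. Second, your $H_\rho$ is not quite right as written: components of $f'^{-1}(\tilde H)\cup Z$ contracted by $f_\rho$ have images of codimension at least $2$, so they should be enclosed in a hypersurface avoiding $\star$ (the paper glosses the same point by taking the ``codimension $1$ part''); more seriously, nothing in your construction prevents $Z$ from meeting $f_\rho^{-1}(\star)$, in which case $\star \in f_\rho(Z)\subset H_\rho$, contradicting the requirement $\star\notin H_\rho$. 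Since $f_\rho^{-1}(\star)$ is finite for general cuts, this is easy to arrange: write $\det E'\cong A\otimes B^{-1}$ with $A,B$ very ample and take $Z=D_A\cup D_B$ for general members $D_A\in |A|$, $D_B\in |B|$, which then avoid $f_\rho^{-1}(\star)$.
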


\begin{proof}
By resolution of singularities, after some birational morphism we can suppose $X$ is smooth and $H$ is normal crossing; we make this assumption in the sequel.

 Let $(H_i)$ be the irreducible components of $H$ and let $\alpha_i\in \pi_1(X\setminus H,\star)$ be a simple loop around $H_i$. Take a lift $M_i\in \mathrm{SL}_m(\C)$ for $\rho(\alpha_i)$. Consider the finite set $S_i$ whose elements are finite order quotients $\lambda/\mu$ of eigenvalues $\lambda,\mu$ of $M_i$. Set $S:=\cup_i S_i$ and let $O\subset \N^*$ be the set given by the orders of the elements of $S$. Let $\nu:=\mathrm{lcm}(O)$. Let 
 $r : (X_1,\star_1) \rightarrow (X,\star)$ be a finite morphism, \'etale in the neighborhood of $\star_1$, with ramification indices over $H_i$ equal to multiples of $\nu$ and such that $H_1=r^{-1}(H)$ is a normal crossing hypersurface; existence of such an $r$ is given, for example, by \cite[Theorem 17]{MR622451}.

 Then, we can apply our realization Theorem $\ref{realization}$: there exists a flat projective logarithmic connection $\nabla$ with poles in $H_1$ with monodromy $r^*\rho$. Let $P$ be the underlying analytic locally trivial $\mathbb{P}^{m-1}$-bundle of $\nabla$ and take $\star_1\in r^{-1}(\star)$. By Serre, \cite[Th\'eor\`eme $3$ p. $34$]{SerreChev58}, $P$ is the analytification of an algebraic locally isotrivial $\mathbb{P}^{m-1}$-bundle:  $\star_1$ has a Zariski neighborhood $U_1\subset X_1$, $U_1=X_1\setminus \tilde{H}_1$ such that there exists a finite \'etale algebraic covering $q : (U_2,\star_2)\rightarrow (U_1,\star_1)$ satisfying that $q^*P$ is trivial. We can suppose $H_1\subset \tilde{H}_1$ and $U_1$ is affine, which we do.
 
  Then, we have an embedding $U_2\subset Y$, $\overline{U_2}=Y$, in a smooth projective $Y$ such that the algebraic map $q$ extends to a morphism $q :Y\rightarrow X_1$. 
  By triviality of $q^*(P)_{\vert U_2}$ and Proposition $\ref{trivlift}$, $\nabla_2:=q^*(\nabla_{\vert U_1})$ lifts to a trace free flat linear connection over $U_2$. For this reason, the monodromy representation of $\nabla_2$ lifts to $\mathrm{SL}_m(\C)$. By construction this monodromy is $q^* r^* \rho$. Hence, if we define, $\star_{\rho}:=\star_2$, $Y_{\rho}:=Y$, $f_{\rho}:=r\circ q$ and $H_{\rho}$ to be the codimension $1$ part of $r\circ q(Y\setminus U_2)$, we have the situation announced in the theorem. 
\end{proof}
\bibliographystyle{smfalpha}
\bibliography{biblio}
\end{document}